\newtheorem{theorem}{Theorem}
\newtheorem{lemma}[theorem]{Lemma}
\newtheorem{proposition}[theorem]{Proposition}
\newtheorem{corollary}[theorem]{Corollary}
\newtheorem{remark}[theorem]{Remark}
\newtheorem{example}[theorem]{Example}
\begin{document}
\title[Localization of injective modules]{Localization of injective modules over arithmetical rings}
\author{Fran\c cois Couchot}
\address{Laboratoire de Math\'ematiques Nicolas Oresme, CNRS UMR
  6139,
D\'epartement de math\'ematiques et m\'ecanique,
14032 Caen cedex, France}
\email{couchot@math.unicaen.fr} 

\keywords{arithmetical ring, semicoherent ring, injective module, FP-injective module, finitely injective module, Goldie dimension, valuation ring, Pr\"ufer domain, finite character.}

\subjclass[2000]{Primary 13F05, 13C11}

\begin{abstract}  It is proved that localizations of 
injective $R$-modules  of finite Goldie dimension are  injective if $R$ is an arithmetical ring satisfying the following condition: for every maximal ideal $P$, $R_P$ is either coherent or not semicoherent. If, in addition, each finitely generated $R$-module has finite Goldie dimension, then localizations of finitely injective $R$-modules are finitely injective too. Moreover, if $R$ is a Pr\"ufer domain of finite character, localizations of 
injective $R$-modules  are  injective. 
\end{abstract}
\maketitle

This is a sequel and a complement of \cite{Cou06}. If $R$ is a noetherian or hereditary ring, it is well known that localizations of injective $R$-modules are injective. By \cite[Corollary 8]{Cou06} this property holds if $R$ is a h-local Pr\"ufer domain. However \cite[Example 1]{Cou06}
shows that this result is not generally true. E. C. Dade was probably the first  to study localizations of injective modules. By \cite[Theorem 25]{Dad81}, there exist a ring $R$, a multiplicative subset $S$ and an injective module $G$ such that $S^{-1}G$ is not injective. In this example we can choose $R$ to be a coherent domain.

The aim of this paper is to study localizations of injective modules over arithmetical rings. We deduce from \cite[Theorem 3]{Cou06} the two following results: any localization of an injective $R$-module of finite Goldie dimension  is injective if and only if any localization at a maximal ideal of $R$ is either coherent or non-semicoherent (Theorem~\ref{T:gold}) and each localization of any injective module over a Pr\"ufer domain of finite character is injective (Theorem~\ref{T:finchar}). Moreover, if any localization at a maximal ideal of $R$ is either coherent or non-semicoherent, and if each finitely generated $R$-module has a finite Goldie dimension, then each localization of any finitely injective $R$-module is finitely injective.

In this paper all rings are associative and commutative with unity and
all modules are unital. 
A module is said to be
\textit{uniserial} if its  submodules are linearly ordered by inclusion.
A ring $R$ is a \textit{valuation ring} if it is uniserial as
$R$-module and $R$ is \textit{arithmetical} if $R_P$ is a
valuation ring for every maximal ideal $P.$ An arithmetical domain $R$ is said to be \textit{Pr\"ufer}. We say that a module $M$ is of {\it Goldie dimension} $n$ if and only if its injective hull $\mathrm{E}(M)$ is a direct
sum of $n$ indecomposable injective modules. 
 We say that a domain $R$ is \textit{of finite
  character} if every non-zero element is contained in  finitely
many maximal ideals.

As in \cite{RaRa73}, a module $M$ over a ring $R$ is said to be {\it finitely injective} if every homomorphism $f:A\rightarrow M$ extends to $B$ whenever $A$ is a finitely generated submodule of an arbitrary $R$-module $B$.

As in \cite{Mat85} a ring $R$ is said to be \textit{semicoherent} if $\mathrm{Hom}_R(E,F)$ is a submodule of a flat $R$-module for any pair of injective $R$-modules $E,\ F$. An $R$-module $E$ is {\it FP-injective} if 
$\hbox{Ext}_R^1 (F, E) = 0$  for any finitely presented $R$-module $F,$ and $R$ is {\it self FP-injective} if $R$ is 
FP-injective as $R$-module.  We recall that a module $E$ is FP-injective if and only if it is a pure submodule
of every overmodule. If each injective $R$-module is flat we say that $R$ is an \textit{IF-ring}. By \cite[Theorem 2]{Col75}, $R$ is an IF-ring if and only if it is coherent and self FP-injective.

\medskip

We begin by some results on semicoherent rings.

\begin{proposition}
\label{P:FPinj} Let $R$ be a self FP-injective ring. Then $R$ is coherent if and only if it is semicoherent.
\end{proposition}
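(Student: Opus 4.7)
The plan is to handle the two implications separately. The forward direction (coherent $\Rightarrow$ semicoherent) combines Colby's theorem quoted just before the proposition (coherent plus self FP-injective $=$ IF) with a routine Ext computation. The converse (semicoherent $\Rightarrow$ coherent) is the substantive direction, which I would reduce via Chase's criterion to showing that $R^X$ is flat for every set $X$, and then exploit that self FP-injectivity makes $R^X$ pure in every overmodule.

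For coherent $\Rightarrow$ semicoherent, note that $R$ is then IF, so every injective module is flat. Given injectives $E, F$, pick a finite presentation $0\to K\to R^n\to M\to 0$ of an arbitrary finitely presented $M$; using the adjunction $\mathrm{Hom}_R(A,\mathrm{Hom}_R(E,F))\cong\mathrm{Hom}_R(A\otimes_R E,F)$, flatness of $E$ (which keeps $K\otimes_R E\to R^n\otimes_R E$ injective), and injectivity of $F$ (which makes $\mathrm{Hom}_R(-,F)$ exact), one obtains $\mathrm{Ext}^1_R(M,\mathrm{Hom}_R(E,F))=0$. So $\mathrm{Hom}_R(E,F)$ is FP-injective, hence pure in its injective hull $\mathrm{E}(\mathrm{Hom}_R(E,F))$; this hull is again flat by the IF property, and $\mathrm{Hom}_R(E,F)$ embeds in a flat $R$-module as required.

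For semicoherent $\Rightarrow$ coherent, by Chase's theorem it is enough to show $R^X$ is flat for every set $X$. Since $\mathrm{Ext}^1_R$ commutes with products in the second variable, the hypothesis that $R$ is self FP-injective yields that $R^X$ is FP-injective, hence pure in every overmodule. The construction that does the work is the natural map $R^X\to\mathrm{End}_R(\mathrm{E}(R^X))$: each $s\in R^X$ acts by multiplication on $R^X$, and this action extends uniquely to $\mathrm{E}(R^X)$ because the inclusion $R^X\subset\mathrm{E}(R^X)$ is essential. Uniqueness also forces the assignment $s\mapsto\tilde s$ to be a ring homomorphism, and injectivity follows from the faithful action of $R^X$ on itself. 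Semicoherence now provides a flat $R$-module $F$ with $\mathrm{End}_R(\mathrm{E}(R^X))\hookrightarrow F$, giving an $R$-module embedding $R^X\hookrightarrow F$ that is pure because $R^X$ is FP-injective. A pure submodule of a flat module is flat, so $R^X$ is flat and Chase's theorem gives coherence.

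The main obstacle is the converse direction, specifically the idea of feeding $\mathrm{E}(R^X)$ (rather than something like $\mathrm{E}(R)$) into the semicoherence hypothesis, and verifying via essentiality that extension of multiplication yields a well-defined injective ring homomorphism $R^X\hookrightarrow \mathrm{End}_R(\mathrm{E}(R^X))$. The remaining ingredients (Chase's theorem, FP-injectivity of products of FP-injectives, purity of FP-injective submodules, the fact that a pure submodule of a flat module is flat, and the adjunction step in the forward direction) are all standard.
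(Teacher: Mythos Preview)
Your forward direction is fine, if a bit roundabout: the paper simply cites that for $R$ coherent, $\mathrm{Hom}_R(E,F)$ is already flat for any pair of injectives $E,F$, whereas you prove it is FP-injective and then embed it purely in its (flat) injective hull. Both are valid.

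The converse, however, has a genuine gap. You claim that multiplication by $s\in R^X$ on $R^X$ ``extends uniquely to $\mathrm{E}(R^X)$ because the inclusion $R^X\subset\mathrm{E}(R^X)$ is essential.'' Essentiality does \emph{not} give uniqueness of extensions: if $g,g'\colon \mathrm{E}(R^X)\to\mathrm{E}(R^X)$ agree on $R^X$, essentiality only tells you that $\mathrm{im}(g-g')$ meets $R^X$ nontrivially whenever $g\ne g'$, which is no contradiction. (Concretely, over $R=\mathbb{Z}/4\mathbb{Z}$ the essential submodule $2R\subset R=\mathrm{E}(2R)$ admits both $0$ and multiplication by $2$ as extensions of the zero map.) Without uniqueness you cannot conclude that your assignment $s\mapsto\tilde s$ is $R$-linear, so you do not get an $R$-module embedding $R^X\hookrightarrow\mathrm{End}_R(\mathrm{E}(R^X))$, and the rest of the argument collapses. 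What would actually be needed is $\mathrm{Hom}_R\bigl(\mathrm{E}(R^X)/R^X,\mathrm{E}(R^X)\bigr)=0$, and nothing in your hypotheses gives this.

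The paper's route for this implication is quite different and avoids the problem entirely. It never tries to embed a specific module into an endomorphism ring; instead it shows that \emph{every} injective $F$ is a direct summand of $\mathrm{Hom}_R(\mathrm{E}(R),F)$. The point is that purity of $R$ in $\mathrm{E}(R)$ (from self FP-injectivity) makes
\[
0\longrightarrow \mathrm{Hom}_R(\mathrm{E}(R)/R,F)\longrightarrow \mathrm{Hom}_R(\mathrm{E}(R),F)\longrightarrow \mathrm{Hom}_R(R,F)\cong F\longrightarrow 0
\]
exact, and applying $\mathrm{Hom}_R(F,-)$ (via tensor--hom adjunction) shows the identity on $F$ lifts, so the sequence splits. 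Semicoherence then says $\mathrm{Hom}_R(\mathrm{E}(R),F)$ sits inside a flat module, hence $F$ is flat; thus $R$ is IF and therefore coherent.
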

\begin{proof} If $R$ is coherent then, by \cite[Theorem~XIII.6.4(b)]{FuSa01}, $\mathrm{Hom}_R(E,F)$ is flat for any pair of injective modules $E,\ F$; so, $R$ is semicoherent. Conversely, let $E$ be the injective hull of $R$. Since $R$ is a pure submodule of $E$, then, for each injective $R$-module $F$, the following sequence is exact:
\[0\rightarrow\mathrm{Hom}_R(F\otimes_RE/R,F)\rightarrow\mathrm{Hom}_R(F\otimes_RE,F)\rightarrow\mathrm{Hom}_R(F\otimes_RR,F)\rightarrow 0.\]
By using the natural isomorphisms $\mathrm{Hom}_R(F\otimes_RB,F)\cong\mathrm{Hom}_R(F,\mathrm{Hom}_R(B,F))$ and $F\cong\mathrm{Hom}_R(R,F)$ we get the following exact sequence:
\[0\rightarrow\mathrm{Hom}_R(F,\mathrm{Hom}_R(E/R,F))\rightarrow\mathrm{Hom}_R(F,\mathrm{Hom}_R(E,F))\rightarrow\mathrm{Hom}_R(F,F)\rightarrow 0.\]
So, the identity map on $F$ is the image of an element of $\mathrm{Hom}_R(F,\mathrm{Hom}_R(E,F))$.
Consequently the following sequence splits:
\[0\rightarrow\mathrm{Hom}_R(E/R,F)\rightarrow\mathrm{Hom}_R(E,F)\rightarrow F\rightarrow 0.\]
It follows that $F$ is a summand of a flat module. So, $R$ is an IF-ring. 
\end{proof}

\begin{corollary}
\label{C:semicoh} Let $R$ be a ring such that its ring of quotients $Q$ is self FP-injective. Then $R$ is semicoherent if and only if $Q$ is coherent.
\end{corollary}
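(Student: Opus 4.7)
The plan is to apply Proposition~\ref{P:FPinj} to $Q$, which immediately converts the coherence of $Q$ into its semicoherence, reducing the task to the (now symmetric) equivalence: $R$ is semicoherent iff $Q$ is semicoherent. Throughout I will use that the ring of quotients $Q$ is $R$-flat, so flat $Q$-modules are automatically $R$-flat by transitivity, and that $\mathrm{Hom}_R(Q,-)$ is right adjoint to the exact restriction functor from $Q$-modules to $R$-modules, hence carries $R$-injectives to $Q$-injectives.

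For the direction \emph{$R$ semicoherent $\Rightarrow$ $Q$ semicoherent}, I will take injective $Q$-modules $E$ and $F$, note that they are injective as $R$-modules (by flatness of $Q$ over $R$) and that $\mathrm{Hom}_R(E,F)=\mathrm{Hom}_Q(E,F)$ because regular elements of $R$ act invertibly on the $Q$-module $F$, forcing any $R$-linear map into $F$ to be $Q$-linear. Semicoherence of $R$ produces a flat $R$-module $N$ with $\mathrm{Hom}_R(E,F)\hookrightarrow N$; tensoring with the flat $R$-module $Q$ and using that $M\otimes_R Q\cong M$ for any $Q$-module $M$ (since $R\to Q$ is a ring epimorphism) yields the embedding $\mathrm{Hom}_Q(E,F)\hookrightarrow N\otimes_R Q$ into a flat $Q$-module.

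For the reverse direction, given injective $R$-modules $E$ and $F$, I will form the $Q$-injective modules $\widetilde{E}:=\mathrm{Hom}_R(Q,E)$ and $\widetilde{F}:=\mathrm{Hom}_R(Q,F)$, and use the composition map $\phi\mapsto(\alpha\mapsto\phi\circ\alpha)$ to define an $R$-linear map $\mathrm{Hom}_R(E,F)\to\mathrm{Hom}_R(\widetilde{E},\widetilde{F})=\mathrm{Hom}_Q(\widetilde{E},\widetilde{F})$, the last equality holding because $\widetilde{F}$ is a $Q$-module. Semicoherence of $Q$ then embeds $\mathrm{Hom}_Q(\widetilde{E},\widetilde{F})$ in a flat $Q$-module $M$, which is also $R$-flat by transitivity of flatness along $R\to Q$, producing the required embedding $\mathrm{Hom}_R(E,F)\hookrightarrow M$ into a flat $R$-module.

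The principal technical point is verifying injectivity of the composition map in the reverse direction: this reduces to showing surjectivity of the evaluation-at-$1$ map $\widetilde{E}=\mathrm{Hom}_R(Q,E)\twoheadrightarrow E$, which follows from $R$-injectivity of $E$ applied to the inclusion $R\hookrightarrow Q$ and the assignment $r\mapsto re$, so that any $e\in E$ is reached by some $\alpha:Q\to E$. Granted this, the rest is standard adjunction and flatness bookkeeping.
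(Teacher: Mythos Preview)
Your proof is correct. For the forward direction you essentially reproduce the content of \cite[Proposition~1.2]{Mat85}, which the paper simply cites, so there is no real difference there. For the converse, however, you take a genuinely different route: you pass through the coinduced modules $\widetilde{E}=\mathrm{Hom}_R(Q,E)$ and $\widetilde{F}=\mathrm{Hom}_R(Q,F)$ and use only the \emph{semicoherence} of $Q$ to embed $\mathrm{Hom}_R(E,F)$ into a flat $Q$-module via the composition map. The paper instead exploits the full strength of the hypothesis, namely that $Q$ is an IF-ring (coherent plus self FP-injective): it observes that regular elements of $R$ act surjectively on the injective module $E$, so $\mathrm{Hom}_R(E,F)$ is torsion-free and embeds in $Q\otimes_R\mathrm{Hom}_R(E,F)$, whose $Q$-injective hull is automatically flat because over an IF-ring every injective is flat. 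The paper's argument is shorter and more concrete; yours has the virtue of isolating the statement ``$R$ semicoherent $\Leftrightarrow$ $Q$ semicoherent'' for flat ring epimorphisms $R\to Q$, without invoking the IF property for the converse.
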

\begin{proof} If $R$ is semicoherent, then so is $Q$ by \cite[Proposition 1.2]{Mat85}. From Proposition~\ref{P:FPinj} we deduce that $Q$ is coherent. Conversely, let $E$ and $F$ be injective $R$-modules. It is easy to check that the multiplication by a regular element of $R$ in $\mathrm{Hom}_R(E,F)$ is injective. So, $\mathrm{Hom}_R(E,F)$ is a submodule of the injective hull of $Q\otimes_R\mathrm{Hom}_R(E,F)$ which is flat over $Q$ and $R$ because $Q$ is an IF-ring. 
\end{proof}

\medskip

From Corollary~\ref{C:semicoh} and \cite[Theorem II.11]{Cou03}  we deduce the following:
\begin{corollary}
\label{C:ValSemiCoh} Let $R$ be a valuation ring. Denote by $Z$ its subset of zerodivisors which is a prime ideal. Assume that $Z\ne 0$. Then the following conditions are equivalent:
\begin{enumerate}
\item $R$ is semicoherent;
\item $R_Z$ is an IF-ring;
\item $Z$ is not a flat $R$-module.
\end{enumerate} 
\end{corollary}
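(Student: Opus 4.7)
Since $R$ is a valuation ring whose set of zerodivisors is exactly $Z$, every element of $R\setminus Z$ is regular, so the classical ring of quotients $Q$ of $R$ equals $R_Z$. This identification is what lets me feed both Corollary~\ref{C:semicoh} (which requires self FP-injectivity of $Q$) and \cite[Theorem II.11]{Cou03} (a characterization of the IF property for valuation rings in terms of non-flatness of the maximal ideal) into the argument.

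I would first prove (2)$\Leftrightarrow$(3) by applying \cite[Theorem II.11]{Cou03} to the valuation ring $R_Z$, whose maximal ideal is $ZR_Z$. Flatness of $Z$ over $R$ is equivalent to flatness of $ZR_Z$ over $R_Z$ (the forward direction by base change along $R\to R_Z$, the reverse because flatness is a local property and $R_Z$ is the only relevant localization at which $Z$ might fail to be flat). So applying the cited theorem to $R_Z$ delivers exactly the equivalence between $R_Z$ being IF and $Z$ being non-flat.

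For (1)$\Leftrightarrow$(2), I would use Corollary~\ref{C:semicoh}. Assuming (2), $Q=R_Z$ is IF and therefore self FP-injective, so Corollary~\ref{C:semicoh} gives ``$R$ semicoherent iff $Q$ coherent,'' and $Q$ being IF is certainly coherent, yielding (1). Conversely, under (1), the task reduces to establishing self FP-injectivity of $Q$: once that is in hand, Corollary~\ref{C:semicoh} turns semicoherence of $R$ into coherence of $Q$, so $Q$ is both coherent and self FP-injective, hence an IF-ring, which is (2). The delicate step is this self FP-injectivity of $Q$ from (1) alone, and I plan to extract it contrapositively through the already-proved (2)$\Leftrightarrow$(3) together with \cite[Theorem II.11]{Cou03}: if $Z$ were flat, that cited theorem should rule out semicoherence of $R$, possibly combined with \cite[Proposition 1.2]{Mat85} to pass semicoherence down to $R_Z$ before deriving a contradiction. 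Closing this loop is the main obstacle; the rest is formal manipulation of the preceding results and the flatness transfer.
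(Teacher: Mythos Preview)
Your approach is the paper's own: both argue by combining Corollary~\ref{C:semicoh} with \cite[Theorem II.11]{Cou03}, after identifying the classical quotient ring $Q$ with $R_Z$. Your treatment of (2)$\Leftrightarrow$(3) and of (2)$\Rightarrow$(1) is fine.

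The gap is in (1)$\Rightarrow$(2). You correctly observe that Corollary~\ref{C:semicoh} requires $Q=R_Z$ to be self FP-injective before it can be invoked, and you propose to extract this from (1) contrapositively: assume $Z$ is flat, pass semicoherence from $R$ down to $R_Z$ via \cite[Proposition~1.2]{Mat85}, and then reach a contradiction from \cite[Theorem II.11]{Cou03}. But to turn ``$R_Z$ semicoherent and $ZR_Z$ flat'' into a contradiction you must convert semicoherence of $R_Z$ into coherence of $R_Z$ (so that \cite[Theorem II.11]{Cou03} forces $ZR_Z$ non-flat). That conversion is Proposition~\ref{P:FPinj}, and it already presupposes that $R_Z$ is self FP-injective --- precisely the fact you are trying to establish. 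So the loop you describe does not close; the argument is circular as written.

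The resolution is that self FP-injectivity of $R_Z$ holds \emph{unconditionally}, not merely under hypothesis (1). Indeed $R_Z$ is a valuation ring equal to its own classical ring of quotients (every non-unit of $R_Z$ lies in $ZR_Z$ and is therefore a zerodivisor, since $Z\ne 0$), and this is part of what \cite[Theorem II.11]{Cou03} supplies. Once $Q=R_Z$ is known to be self FP-injective outright, Corollary~\ref{C:semicoh} applies directly and yields (1)$\Leftrightarrow$``$Q$ coherent''$\Leftrightarrow$(2) with no detour; the ``delicate step'' disappears.
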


\medskip

From Corollary~\ref{C:ValSemiCoh} and \cite[Theorem 3]{Cou06} we deduce the following:

\begin{corollary}
\label{C:InjVal} Let $R$ be a valuation ring of maximal ideal $P$. Then  the following conditions are equivalent:
\begin{enumerate}
\item $R$ is either coherent or non-semicoherent;
\item for each multiplicative subset $S$ of $R$ and for each injective  $R$-module $E$, $S^{-1}E$ is injective;
\item for each multiplicative subset $S$ of $R$ and for each FP-injective $R$-module $E$, $S^{-1}E$ is FP-injective;
\item $(\mathrm{E}_R(R/P))_Z$ is FP-injective.
\end{enumerate}  
\end{corollary}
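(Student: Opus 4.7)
The plan is a cyclic proof $(1)\Rightarrow(2)\Rightarrow(3)\Rightarrow(4)\Rightarrow(1)$ in which the two middle implications are formal and the two implications involving condition (1) rest on \cite[Theorem~3]{Cou06} combined with Corollary~\ref{C:ValSemiCoh}.

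For $(2)\Rightarrow(3)$, I would use the characterization of FP-injectivity by pure embeddings: any FP-injective $E$ is a pure submodule of its injective hull $I$, and since localization is exact and preserves purity, $S^{-1}E$ is pure in $S^{-1}I$; condition (2) makes $S^{-1}I$ injective, so $S^{-1}E$ is FP-injective. The implication $(3)\Rightarrow(4)$ is simply the specialization $S=R\setminus Z$ and $E=\mathrm{E}_R(R/P)$.

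For $(1)\Rightarrow(2)$ I would invoke \cite[Theorem~3]{Cou06}. If $R$ is coherent, the hypothesis of that theorem applies directly. If instead $R$ is non-semicoherent, Corollary~\ref{C:ValSemiCoh} tells us that either $Z=0$ (in which case $R$ is a valuation domain and hence automatically coherent, so this subcase reduces to the previous one) or $Z\neq 0$ with $Z$ flat, and this is the alternative regime in which \cite[Theorem~3]{Cou06} delivers preservation of injectivity under every localization.

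The main obstacle is $(4)\Rightarrow(1)$. Arguing by contrapositive, I would assume $R$ is semicoherent but not coherent and aim to show $(\mathrm{E}_R(R/P))_Z$ is not FP-injective. Since every valuation domain is coherent, $R$ cannot be a domain, so $Z\neq 0$; Corollary~\ref{C:ValSemiCoh} then guarantees that $R_Z$ is an IF-ring and $Z$ is not flat, and Proposition~\ref{P:FPinj} forbids $R$ from being self FP-injective. These are precisely the structural data that should trigger the negative half of \cite[Theorem~3]{Cou06}, producing an explicit failure of FP-injectivity for $(\mathrm{E}_R(R/P))_Z$ and contradicting (4). Extracting this obstruction from the companion paper is the step that carries the real content of the corollary.
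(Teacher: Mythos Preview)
Your cyclic scheme and the use of Corollary~\ref{C:ValSemiCoh} together with \cite[Theorem~3]{Cou06} match the paper's proof exactly. Two small points the paper makes explicit that you elide: in $(1)\Rightarrow(2)$ the coherent case is not fed into \cite[Theorem~3]{Cou06} ``directly'' but via the dichotomy $Z=0$ (so $Z$ is flat) or $Z=P$ (so $R$ is IF and $E$ is flat); and in $(4)\Rightarrow(1)$ the paper also invokes \cite[Proposition~2.4]{Cou82} to ensure $\mathrm{E}_R(R/P)$ is not flat, which is part of the input needed to rerun the negative half of the proof of \cite[Theorem~3]{Cou06}.
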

\begin{proof} $(1)\Rightarrow (2)$. Since $R$ is a valuation ring, $R\setminus S$ is a prime ideal. If $R$ is coherent then either $Z=0$ or $Z=P$. In the first case $Z$ is flat and in the second $E$ is flat. So we conclude by \cite[Theorem 3]{Cou06}. If $R$ is not semicoherent then $Z$ is flat. We conclude in the same way. 

$(2)\Rightarrow (3)$. $E$ is a pure submodule of its injective hull $H$. Then $S^{-1}E$ is a pure submodule of $S^{-1}H$ which is injective by $(2)$. So, $S^{-1}E$ is FP-injective.

$(3)\Rightarrow (4)$ is obvious.

$(4)\Rightarrow (1)$. Suppose that $Z$ is not flat. If $R$ is not coherent, then, by  \cite[Theorem II.11]{Cou03}, $Z\ne 0$ and $Z\ne P$, and $R$ is not self FP-injective.  Let $E=\mathrm{E}_R(R/P)$. By \cite[Proposition 2.4]{Cou82} $E$ is not flat. Now, we do as in the last part of the proof of \cite[Theorem 3]{Cou06} to show that $E_Z$ is not FP-injective. This contradicts $(4)$. The proof is now complete. 
\end{proof}

\begin{theorem} \label{T:gold} For any arithmetical ring $R$ the following conditions are equivalent:
\begin{enumerate}
\item for each maximal ideal $P$, $R_P$ is either coherent or non-semicoherent;
\item for each multiplicative subset $S$ and for each injective  $R$-module $G$ of finite Goldie dimension, $S^{-1}G$ is injective;
\item for each multiplicative subset $S$  and for each FP-injective $R$-module $G$ of finite Goldie dimension, $S^{-1}G$ is FP-injective;
\item for each maximal ideal $P$, $Q(R_P)\otimes_R\mathrm{E}_R(R/P)$ is FP-injective, where $Q(R_P)$ is the ring of fractions of $R_P$.
\end{enumerate}  
\end{theorem}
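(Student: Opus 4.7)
\emph{Plan.} The strategy is to reduce every implication to the valuation-ring case Corollary~\ref{C:InjVal} by localizing at maximal ideals; since $R$ is arithmetical each $R_P$ is a valuation ring to which that corollary applies.

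For $(1)\Rightarrow(2)$, the first step is to decompose the injective module $G$ of finite Goldie dimension $n$ as $G=\bigoplus_{i=1}^{n}G_i$ with each $G_i$ indecomposable injective. The endomorphism ring $\mathrm{End}_R(G_i)$ is local, so the preimage in $R$ of its maximal ideal is a prime; picking any maximal ideal $P_i$ containing this prime, $G_i$ acquires an $R_{P_i}$-module structure. Grouping the summands by their chosen $P_i$ yields $G=\bigoplus_P G^{(P)}$ with only finitely many nonzero terms, each $G^{(P)}$ injective over $R$ and hence over the valuation ring $R_P$. Writing $T_P$ for the image of $S$ in $R_P$, the identification $S^{-1}G^{(P)}\cong T_P^{-1}G^{(P)}$ lets us invoke Corollary~\ref{C:InjVal}$(1)\Rightarrow(2)$ at $R_P$ to conclude $T_P^{-1}G^{(P)}$ is injective as an $R_P$-module, hence as an $R$-module. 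A finite direct sum of injectives is injective, which gives $(2)$.

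The implication $(2)\Rightarrow(3)$ is the standard purity trick: the injective hull $H=\mathrm{E}(G)$ inherits Goldie dimension $n$, $G$ embeds purely in $H$ (as $G$ is FP-injective), so $S^{-1}G$ embeds purely in the injective module $S^{-1}H$ supplied by $(2)$, whence $S^{-1}G$ is FP-injective. For $(3)\Rightarrow(4)$ specialize: take $G=\mathrm{E}_R(R/P)$ (injective, Goldie dimension one) and let $S$ be the preimage in $R$ of $R_P\setminus Z(R_P)$, where $Z(R_P)$ denotes the zerodivisors of $R_P$; since $G$ is already an $R_P$-module one checks $S^{-1}G\cong Q(R_P)\otimes_R G$, which $(3)$ declares FP-injective.

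For $(4)\Rightarrow(1)$ fix a maximal ideal $P$. The module $E=\mathrm{E}_R(R/P)$ is also the injective hull of $R_P/PR_P$ over $R_P$, so $Q(R_P)\otimes_R E$ coincides with the localization of $E$ at the prime $Z(R_P)$ of $R_P$. By $(4)$ this module is FP-injective over $R$; since $R_P$ is flat over $R$ and $E$ is already an $R_P$-module, this transfers to FP-injectivity over $R_P$. Applying Corollary~\ref{C:InjVal}$(4)\Rightarrow(1)$ to the valuation ring $R_P$ yields $(1)$. The main obstacle is the decomposition step in $(1)\Rightarrow(2)$: one must pin down the $R_P$-module structure on each indecomposable injective summand and verify that localization over $R$ agrees with the corresponding localization over $R_P$; once that is in place the remainder is bookkeeping reduction to Corollary~\ref{C:InjVal}.
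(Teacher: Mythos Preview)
Your proposal is correct and follows essentially the same route as the paper: reduce to indecomposable injectives, use locality of $\mathrm{End}_R(G_i)$ to obtain an $R_P$-module structure, identify $S^{-1}G_i$ with a localization over the valuation ring $R_P$, and invoke Corollary~\ref{C:InjVal}; the purity argument for $(2)\Rightarrow(3)$ and the specializations for $(3)\Rightarrow(4)$ and $(4)\Rightarrow(1)$ match the paper's as well. Your write-up is in fact more explicit than the paper's on several points (the choice of $S$ in $(3)\Rightarrow(4)$, the identification $\mathrm{E}_R(R/P)=\mathrm{E}_{R_P}(R_P/PR_P)$, and the transfer of FP-injectivity between $R$ and $R_P$), which the paper simply labels ``obvious'' or ``immediate.''
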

\begin{proof} $(1)\Rightarrow (2)$. $G$ is a finite direct sum of indecomposable injective modules. We may assume that $G$ is indecomposable.  Since $\mathrm{End}_RG$ is local, there exists a maximal ideal $P$ such that  $G$ is a module over $R_P$. If $S'$ is the multiplicative subset of $R_P$ generated by $S$, then $S^{-1}G=S'^{-1}G$. We conclude that $S^{-1}G$ is injective by Corollary~\ref{C:InjVal}. 

We show  $(2)\Rightarrow (3)$ as in the proof of Corollary~\ref{C:InjVal}, and $(3)\Rightarrow (4)$ is obvious. 

$(4)\Rightarrow (1)$ is an immediate consequence of Corollary~\ref{C:InjVal}.
\end{proof}

\begin{remark}
\textnormal{If $R$ is an arithmetical ring which is coherent or reduced, then $R$ satisfies the conditions of Theorem~\ref{T:gold}.}
\end{remark}

\begin{corollary} \label{C:gold}
Let $R$ be an arithmetical ring satisfying the following two conditions:
\begin{itemize}
\item[(a)] for each maximal ideal $P$, $R_P$ is either coherent or non-semicoherent;
\item[(b)] every finitely generated $R$-module has a finite Goldie dimension.
\end{itemize}
Then, for each multiplicative subset $S$ and for each finitely injective (respectively FP-injective) $R$-module $G$ , $S^{-1}G$ is finitely injective (respectively FP-injective).

Moreover, if $R_P$ is coherent for each maximal ideal $P$ then $R$ is coherent too.
\end{corollary}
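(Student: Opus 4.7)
The assertion collects three claims: (I) $S^{-1}G$ is finitely injective whenever $G$ is; (II) $S^{-1}G$ is FP-injective whenever $G$ is; and (III) $R$ is coherent when every $R_P$ is. I treat them in order.

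For (I), the plan is to reduce each extension problem over $S^{-1}G$ to Theorem~\ref{T:gold}, via hypothesis~(b). Given $f\colon A\to S^{-1}G$ with $A$ a finitely generated submodule of an arbitrary $R$-module $B$, the image $f(A)$ is finitely generated, hence $f(A)\subseteq S^{-1}G'$ for some finitely generated submodule $G'\subseteq G$. By (b), $G'$ has finite Goldie dimension, and so does its injective hull $E_R(G')$, which we realize inside $E_R(G)$. The finite injectivity of $G$ forces $E_R(G')$ to embed into $G$: the inclusion $G'\hookrightarrow G$, regarded as a map from the finitely generated submodule $G'$ of $E_R(G)$, extends to some $\varphi\colon E_R(G)\to G$, and $\varphi$ is injective on $E_R(G')$ because any nonzero kernel would meet the essential submodule $G'$ on which $\varphi$ is the identity. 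Thus $f$ factors through $S^{-1}E_R(G')$, which is injective by Theorem~\ref{T:gold} applied to the finite-Goldie injective module $E_R(G')$ (using hypothesis~(a)); the extension of $f$ to $B$ then follows by composition with $S^{-1}E_R(G')\hookrightarrow S^{-1}G$.

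For (II), reduce to (I) by purity. Any FP-injective module is pure in its injective hull, and localization preserves pure embeddings, so if $G$ is FP-injective then $S^{-1}G$ is pure in $S^{-1}E_R(G)$. Since $E_R(G)$ is injective (a fortiori finitely injective), (I) gives that $S^{-1}E_R(G)$ is finitely injective, in particular FP-injective. A pure submodule of an FP-injective module is FP-injective, so $S^{-1}G$ is FP-injective.

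For (III), assume each $R_P$ is coherent and let $I=(r_1,\dots,r_n)$ be a finitely generated ideal of $R$; the goal is to show that the kernel $K$ of $R^n\twoheadrightarrow I$ is finitely generated. Locally, $K_P$ is finitely generated over $R_P$ by coherence of $R_P$; arithmeticity makes $I_P$ principal, yielding an explicit description of $K_P$ as generated by certain relations among the $r_j$ together with generators of the annihilator of a single local generator of $I_P$; and condition~(b) forces $R$ itself (as a finitely generated $R$-module) to have finite Goldie dimension, restricting the variety of local behaviours. The main obstacle I expect is the global patching step: combining the local finite generating sets of $K_P$ into a single finite global generating set for $K$. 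This is where the finite-character-type consequence of~(b) must do the heavy lifting, and it is the step that distinguishes coherence of $R$ from the automatic local coherence.
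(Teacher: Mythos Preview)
Your treatments of (I) and (II) are correct and essentially coincide with the paper's proof. The paper phrases (I) via the characterization from \cite[Proposition~3.3]{RaRa73}---a module is finitely injective if and only if it contains an injective hull of each of its finitely generated submodules---which is precisely what your essentiality argument establishes inline; the remaining steps (applying Theorem~\ref{T:gold} to the finite-Goldie injective hull sitting inside $G$, then concluding) are the same.

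Part~(III), however, is not a proof but a plan with an acknowledged gap. You correctly isolate the global patching of the local syzygies $K_P$ into a single finite generating set for $K$ as the crux, but you supply no mechanism for carrying it out; the vague appeal to a ``finite-character-type consequence of~(b)'' is not an argument, and nothing in your outline explains concretely how finite Goldie dimension of $R$ constrains the local pictures enough to glue them. The paper does not perform this patching directly either: it simply invokes \cite[Th\'eor\`eme~1.4]{Cou82}, where the result (that an arithmetical ring satisfying~(b) is coherent once every $R_P$ is) is already established. To complete your proposal you must either reproduce that argument or cite it.
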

\begin{proof} Let $M$ be a finitely generated $S^{-1}R$-submodule of $S^{-1}G$. There exists a finitely generated submodule $M'$ of $G$ such that $M=S^{-1}M'$. If $G$ is finitely injective, by \cite[Proposition 3.3]{RaRa73} it contains an injective hull $E$ of $M'$. Then $E$ has finite Goldie dimension. By Theorem~\ref{T:gold} $S^{-1}E$ is injective. It contains $M$ and it is contained in $S^{-1}G$. By using again \cite[Proposition 3.3]{RaRa73} we conclude that $S^{-1}G$ is finitely injective. 

If $G$ is FP-injective, it is a pure submodule of its injective hull $H$. Then $S^{-1}G$ is a pure submodule of $S^{-1}H$ which is finitely injective. So, $S^{-1}G$ is FP-injective. 

The last assertion is an immediate consequence of \cite[Th\'eor\`eme 1.4]{Cou82}.
\end{proof}

\begin{remark}
\textnormal{If $R$ is an arithmetical ring satisfying the condition (b) of Corollary~\ref{C:gold} then $\mathrm{Min}\ R/A$ is finite for each ideal $A$: we may assume that $A=0$ and $R$ is reduced; its total ring of quotient is Von Neumann regular by \cite[Proposition 2 p. 106]{Lam66} and semisimple by \cite[Proposition 2 p. 103]{Lam66}; it follows that $\mathrm{Min}\ R$ is finite. However, the converse doesn't hold. For instance, let $R =\{{\binom{d\ \ e}{0\ \ d}}\mid d\in \mathbb{Z}, e\in\mathbb{Q}/\mathbb{Z}\}$ be the trivial extension of $\mathbb{Z}$ by $\mathbb{Q}/\mathbb{Z}$.  Then $N=\{{\binom{0\ \ e}{0\ \ 0}}\mid e\in\mathbb{Q}/\mathbb{Z}\}$ is the only minimal prime. For each prime integer $p$, the localization $R_{(p)}$ is the trivial extension of $\mathbb{Z}_{(p)}$ by $\mathbb{Q}/\mathbb{Z}_{(p)}$. So it is a valuation ring. Consequently $R$ is arithmetical. But $N\cong\mathbb{Q}/\mathbb{Z}$ is an infinite direct sum, whence $R$ is not a module of finite Goldie dimension.}
\end{remark}
\medskip

By \cite[Corollary 8]{Cou06}, if $R$ is a h-local Pr\"ufer domain, all localizations of injective $R$-modules are injective. Now, we extend this result to each Pr\"ufer domain of finite character. A such ring satisfies condition(b) of Corollary~\ref{C:gold}. But $\mathbb{Z}+X\mathbb{Q}[[X]]$ is an example showing that the converse doesn't hold.
\begin{lemma}
\label{L:prodinj} Let $R$ be a Pr\"ufer domain of finite character. For each maximal ideal $P$, let $F_{(P)}$ be an  injective $R_P$-module and let $F=\prod_{P\in\mathrm{Max}\ R}F_{(P)}$. Then $S^{-1}F$ is injective for every multiplicative subset $S$ of $R$.
\end{lemma}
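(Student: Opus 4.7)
The plan is a two-stage Baer argument: first show $F$ itself is $R$-injective, then bootstrap to $S^{-1}F$ by running the same kind of argument over $S^{-1}R$, which I will check is again a Pr\"ufer domain of finite character.

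\emph{Stage 1: $F$ is $R$-injective.} For each $P\in\mathrm{Max}\ R$, the module $F_{(P)}$ is also $R$-injective: given any ideal $I\subseteq R$ and $R$-linear $\phi\colon I\to F_{(P)}$, $\phi$ factors through $I\otimes_R R_P\hookrightarrow R_P$ because $F_{(P)}$ is an $R_P$-module, and the $R_P$-injectivity of $F_{(P)}$ extends this to $R_P\to F_{(P)}$, hence to $R\to F_{(P)}$ via $R\to R_P$. Since a product of injective modules is injective, $F=\prod_P F_{(P)}$ is $R$-injective.

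\emph{Stage 2: $S^{-1}F$ is $S^{-1}R$-injective.} First I observe that $S^{-1}R$ is again a Pr\"ufer domain of finite character: its maximal ideals are precisely the $S^{-1}P$ with $P\in\mathrm{Max}\ R$ and $P\cap S=\emptyset$, with $(S^{-1}R)_{S^{-1}P}=R_P$, and any nonzero $r/s\in S^{-1}R$ lies only in those coming from the (finitely many) maximal ideals of $R$ containing $r$. Partition $\mathrm{Max}\ R=T\sqcup T^c$ with $T=\{P:P\cap S\neq\emptyset\}$ and split $F=F_T\times F_{T^c}$ accordingly. Since $S$ consists of units in $R_P$ for each $P\in T^c$, $S$ acts invertibly on $F_{T^c}$, so $S^{-1}F\cong F_{T^c}\times S^{-1}F_T$. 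The factor $F_{T^c}$ is precisely a product of injective $(S^{-1}R)_Q$-modules indexed over $Q\in\mathrm{Max}\ S^{-1}R$, so Stage 1 applied to $S^{-1}R$ shows it is $S^{-1}R$-injective.

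It remains to show $S^{-1}F_T$ is $S^{-1}R$-injective. For each $P\in T$, Corollary~\ref{C:InjVal} applied to the (coherent) valuation domain $R_P$ gives that $S^{-1}F_{(P)}$ is $R_P$-injective, and consequently also injective as a module over the overring $S^{-1}R_P=R_{P^*}$, where $P^*\subseteq P$ is the largest prime of $R$ disjoint from $S$. I would then run a Baer-type argument on $S^{-1}R$ for $S^{-1}F_T$: given an ideal $J\subseteq S^{-1}R$ and an $S^{-1}R$-linear $\phi\colon J\to S^{-1}F_T$, project $\phi$ through the natural map $S^{-1}F_T\to\prod_{P\in T}S^{-1}F_{(P)}$, extend each component $\phi_P$ to $S^{-1}R\to S^{-1}F_{(P)}$ using the $R_{P^*}$-injectivity of $S^{-1}F_{(P)}$, and reassemble into an extension $S^{-1}R\to S^{-1}F_T$.

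The main obstacle is this reassembly, since localization does not commute with infinite products, so the natural map $S^{-1}F_T\to\prod_{P\in T}S^{-1}F_{(P)}$ is in general neither injective nor surjective, and one cannot simply read off an extension into $S^{-1}F_T$ from componentwise data alone. The finite character hypothesis on $R$ is the key tool here: any $s\in S$ meets only finitely many $P\in T$, so multiplication by $s$ mixes only finitely many factors of $F_T$, and this uniform control is what should allow lifting the coordinatewise extensions back into $S^{-1}F_T$ itself.
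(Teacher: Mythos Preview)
Your Stage~1 and the treatment of $F_{T^c}$ in Stage~2 are correct. The gap is precisely where you locate it: you never actually carry out the ``reassembly'' for $S^{-1}F_T$. Projecting $\phi\colon J\to S^{-1}F_T$ through the comparison map $S^{-1}F_T\to\prod_{P\in T}S^{-1}F_{(P)}$ and extending componentwise produces an element of $\prod_{P\in T}S^{-1}F_{(P)}$, but you give no argument that this element lifts to $S^{-1}F_T$, nor that such a lift would extend $\phi$ itself rather than merely its image under a comparison map that, as you note, need not be injective. Your finite-character observation that a fixed $s\in S$ is a unit in all but finitely many $R_P$ is correct, but you have not shown how it lets you choose compatible extensions with a common denominator, particularly when $J$ is not finitely generated. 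As written, the final paragraph is an identification of the obstacle, not a proof that it can be overcome.

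The paper sidesteps this entirely by decomposing along the torsion/torsion-free axis rather than along $T/T^c$. Writing $F_{(P)}=T_{(P)}\oplus G_{(P)}$ with $T_{(P)}$ torsion and $G_{(P)}$ torsion-free, the product $G=\prod_P G_{(P)}$ is torsion-free, so $S^{-1}G$ is torsion-free divisible and hence injective over the Pr\"ufer domain $S^{-1}R$. For $T=\prod_P T_{(P)}$, finite character forces the torsion submodule of $T$ to be the direct sum $T'=\bigoplus_P T_{(P)}$; since $T'$ is injective it splits off, and $T/T'$ is torsion-free and handled as before. Finally $S^{-1}T'$ is identified as the torsion part of the injective module $\prod_P S^{-1}T_{(P)}$ and is therefore itself injective. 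The torsion/torsion-free decomposition thus replaces your intractable localized infinite product by a direct sum plus torsion-free pieces, both of which behave well under localization.
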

\begin{proof} Let $T_{(P)}$ be the torsion submodule of $F_{(P)}$, let $G_{(P)}=F_{(P)}/T_{(P)}$, let $T=\prod_{P\in\mathrm{Max}\ R}T_{(P)}$ and let $G=\prod_{P\in\mathrm{Max}\ R}G_{(P)}$. Then $G$ is torsion-free and $F\cong T\oplus G$. It is obvious that $S^{-1}G$ is injective. Let $T'=\oplus_{P\in\mathrm{Max}\ R}T_{(P)}$. Since $R$ has finite character, it is easy to check that $T'$ is the torsion submodule of $T$. So, $T'$ is injective and $S^{-1}(T/T')$ is injective. For each maximal ideal $P$, $S^{-1}T_{(P)}$ is injective by \cite[Theorem 3]{Cou06}. Since $S^{-1}T'$ is the torsion submodule of $\prod_{P\in\mathrm{Max}\ R}S^{-1}T_{(P)}$, we successively deduce the injectivity of $S^{-1}T'$ and $S^{-1}T$. 
\end{proof}

\begin{theorem} \label{T:finchar} Let $R$ be a Pr\"ufer domain of finite character. Then, for each  injective module $G$, $S^{-1}G$ is injective for every multiplicative subset $S$ of $R$.
\end{theorem}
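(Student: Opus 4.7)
The plan is to split $G$ as a torsion-free part plus a torsion part, handle the torsion-free part by a $Q$-vector space argument, and handle the torsion part by embedding it into a product of $R_P$-injective hulls and invoking Lemma~\ref{L:prodinj}.

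Because $R$ is a Pr\"ufer domain, every divisible $R$-module is injective. The torsion submodule $T=T(G)$ of the divisible module $G$ is itself divisible (an elementary fact over a domain), and the quotient $D=G/T$ is torsion-free and divisible. Hence both $T$ and $D$ are injective, and $G=T\oplus D$. The torsion-free divisible module $D$ is a vector space over $Q=\mathrm{Frac}(R)$, so every nonzero $r\in R$ acts invertibly on $D$; in particular $S^{-1}D=D$, and this remains $R$-injective because $Q$ itself is $R$-injective (Baer's criterion applied to the domain $R$).

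For the torsion summand $T$, for each maximal ideal $P$ I would set $F_{(P)}=\mathrm{E}_{R_P}(T_P)$, the $R_P$-injective hull of the localization, and consider the composition
\[
\varphi\colon T\longrightarrow \prod_{P\in\mathrm{Max}\,R}T_P\hookrightarrow \prod_{P\in\mathrm{Max}\,R}F_{(P)}.
\]
This map is injective: if $t\in T$ dies in every $T_P$, then for each maximal $P$ there is $s_P\in R\setminus P$ annihilating $t$, so $\mathrm{ann}_R(t)$ is not contained in any maximal ideal, forcing $\mathrm{ann}_R(t)=R$ and $t=0$. Since $T$ is injective as $R$-module, $\varphi$ splits and exhibits $T$ as a direct summand of $\prod_P F_{(P)}$. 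Applying the exact functor $S^{-1}(-)$, $S^{-1}T$ becomes a direct summand of $S^{-1}\prod_P F_{(P)}$, which is $R$-injective by Lemma~\ref{L:prodinj}. Hence $S^{-1}T$ is injective, and $S^{-1}G=S^{-1}T\oplus S^{-1}D$ is injective.

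The principal obstacle is already packaged inside Lemma~\ref{L:prodinj} (where the finite character hypothesis is used to recognize the torsion submodule of the product as a direct sum). The remaining work is the standard torsion/torsion-free splitting available over a Pr\"ufer domain and the split embedding argument enabled by the $R$-injectivity of $T$; no further use of finite character is needed for the reduction itself.
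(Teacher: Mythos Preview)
Your overall strategy works and differs from the paper's, but one step is wrongly justified. The assertion ``because $R$ is a Pr\"ufer domain, every divisible $R$-module is injective'' is false: that property characterizes Dedekind (hereditary) domains among integral domains; over a general Pr\"ufer domain, divisible only coincides with FP-injective. Fortunately the splitting $G=T\oplus D$ with both summands injective holds over \emph{any} domain once $G$ is injective. One checks via Baer's criterion that the torsion part of an injective module is injective: given a nonzero ideal $I$ and $f:I\to T$, extend to $\tilde f:R\to G$; for any $0\ne a\in I$ the element $g=\tilde f(1)$ satisfies $ag=f(a)\in T$, so $g$ is torsion and $\tilde f$ lands in $T$. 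Hence $T$ is a summand of $G$, and $D=G/T$ is torsion-free divisible, i.e.\ a $Q$-vector space. With this correction the rest of your argument goes through verbatim.

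The paper takes a different route that never splits $G$. It sets $E=\prod_{P}\mathrm{E}_R(R/P)$ and forms the double dual $F=\mathrm{Hom}_R(\mathrm{Hom}_R(G,E),E)$, so that $G$ is a summand of $F$. Coherence of $R$ (every Pr\"ufer domain is coherent) makes $\mathrm{Hom}_R(G,E)$ flat, whence each factor $F_{(P)}=\mathrm{Hom}_R(\mathrm{Hom}_R(G,E),\mathrm{E}_R(R/P))$ is an injective $R_P$-module and $F\cong\prod_P F_{(P)}$; Lemma~\ref{L:prodinj} then finishes. Your approach is more elementary in that it avoids the flat--Hom machinery and builds the product of local injectives by a direct embedding of $T$; the paper's double-dual trick is more uniform (no separate torsion-free case) and delivers the $R_P$-injectivity of the factors automatically.
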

\begin{proof} Let $E=\prod_{P\in\mathrm{Max}\ R}\mathrm{E}_R(R/P)$ and let $F=\mathrm{Hom}_R(\mathrm{Hom}_R(G,E),E)$. Then $E$ is an injective cogenerator and $G$ is isomorphic to a summand of $F$. Since $R$ is coherent, $\mathrm{Hom}_R(G,E)$ is flat by \cite[Theorem~XIII.6.4(b)]{FuSa01}. Thus $F$ is injective. We put $F_{(P)}=\mathrm{Hom}_R(\mathrm{Hom}_R(G,E),\mathrm{E}_R(R/P))$. Then $F_{(P)}$ is an injective $R_P$-module and $F\cong\prod_{P\in\mathrm{Max}\ R}F_{(P)}$. By Lemma~\ref{L:prodinj} $S^{-1}F$ is injective. We conclude that $S^{-1}G$ is injective too. 
\end{proof}

\begin{corollary} \label{C:semiInj} 
Let $R$ be a semilocal Pr\"ufer domain. Then, for each  injective module $G$, $S^{-1}G$ is injective for every multiplicative subset $S$ of $R$.
\end{corollary}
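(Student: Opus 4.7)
The plan is very short: I would observe that the hypothesis of being semilocal is strictly stronger than being of finite character, and then invoke Theorem~\ref{T:finchar} as a black box.

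More precisely, the first step is to unpack the definitions. A \emph{semilocal} ring has only finitely many maximal ideals, so for any (in particular, any non-zero) element $r\in R$, the set of maximal ideals containing $r$ is a subset of the finite set $\mathrm{Max}\ R$ and is therefore itself finite. This is exactly the definition of \emph{finite character} recalled at the top of the paper. Hence any semilocal Pr\"ufer domain is a Pr\"ufer domain of finite character.

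The second and only remaining step is to apply Theorem~\ref{T:finchar} to $R$, which immediately yields that $S^{-1}G$ is injective for every injective $R$-module $G$ and every multiplicative subset $S$ of $R$.

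There is no real obstacle here; the corollary is essentially a convenient specialization of Theorem~\ref{T:finchar} to a setting (semilocal Pr\"ufer domains) that arises frequently and that the reader may want to cite directly without having to check the finite character condition by hand. The only thing to verify is the trivial implication \emph{semilocal $\Rightarrow$ finite character}, which follows at once from the fact that a finite set has only finitely many subsets.
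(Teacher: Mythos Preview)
Your proposal is correct and matches the paper's intent: the corollary is stated without proof immediately after Theorem~\ref{T:finchar}, so the implicit argument is exactly the observation that a semilocal ring trivially has finite character, followed by an application of that theorem.
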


The following example shows that the finite character is not a necessary condition in order that localizations of injective modules at multiplicative subsets are still injective.

\begin{example}
\textnormal{Let $R$ be the ring defined in \cite[Example 39]{Hut81}. Then $R$ is a Pr\"ufer domain which is not of finite character. But, since $R$ is the union of a countable family of principal ideal subrings, it is easy to check that, for any multiplicative subset $S$, $R$ satisfies \cite[Condition 14]{Dad81}. So, for each injective module $G$, $S^{-1}G$ is injective by \cite[Theorem 15]{Dad81}.}

\textnormal{Here another example communicated to me by L. Salce. Take $R$ constructed as in Chapter III, Example 5.5 of \cite{FuSa01}, which is a classical example by Heinzer-Ohm of almost Dedekind domain not of finite character. If you start with a countable field $K$, then $R$ is countable, hence conditions (14a) and (14c) of \cite{Dad81} are satisfied. Condition (14b) must be checked only for $I$ principal ideal, and it is easy to see that it holds true.} 
\end{example}

\medskip
Consequently, the following question is unsolved:

\textbf{Open question:}  {\it characterize the Pr\"ufer domains such that localizations of injective at multiplicative subsets are still injective.}

\end{document}